\theoremstyle{plain}
\newtheorem{theorem}{Theorem}
 \newtheorem{lemma}{Lemma}
\newtheorem{proposition}{Proposition}
\theoremstyle{example}
\newtheorem{example}{Example}
\theoremstyle{definition}
\theoremstyle{remark}
\numberwithin{equation}{section}
\newdimen\plusheight
\def\+{\;\lower\plusheight\hbox{$+$}\;}
\newdimen\minusheight
\def\-{\;\lower\minusheight\hbox{$-$}\;}
\newdimen\cdotsheight
\def\cds{\lower\cdotsheight\hbox{$\cdots$}}
\begin{document}
\title[ Divergence in the General Sense for Continued Fractions ]
       {A Theorem on Divergence in the General Sense for Continued Fractions }
\author{Douglas Bowman}
\address{Department Of Mathematical Sciences,
Northern Illinois University,
 De Kalb, IL 60115}
\email{bowman@math.niu.edu }
\author{James Mc Laughlin}
\address{Mathematics Department,
       Trinity College,
       300 Summit Street,
       Hartford, CT 06106-3100}
\email{james.mclaughlin@trincoll.edu}
\keywords{ Continued
Fractions, General Convergence, q-continued fraction,  Rogers-Ramanujan}
\subjclass{Primary:11A55,Secondary:40A15}
\thanks{The second author's research supported in part by a
Trjitzinsky Fellowship.}
\date{April, 18, 2002}
\begin{abstract}
If the odd and even parts of a continued fraction converge to
different values, the continued fraction may or may not converge
in the general sense. We prove a theorem which settles the
question of general convergence for a wide class of such continued
fractions.

We apply this theorem to two  general classes of $q$ continued
fraction to show, that if $G(q)$ is one of these continued
fractions and $|q|>1$, then either $G(q)$ converges or does not
converge in the general sense.

We also show that if the odd and even parts of the continued fraction
$K_{n=1}^{\infty}a_{n}/1$
converge to different values, then $\lim_{n \to \infty}|a_{n}| = \infty$.
\end{abstract}

\maketitle

\section{Introduction}

In \cite{J86}, Jacobsen revolutionised the subject of the convergence
of continued fractions by introducing the concept of \emph{general convergence}.
General convergence is defined in \cite{LW92} as follows.

Let the $n$-th approximant of the continued fraction
{\allowdisplaybreaks
\begin{equation}\label{Meq}
M=b_{0}+\cfrac{a_{1}}{b_{1} + \cfrac{a_{2}}{b_{2} + \cfrac{a_{3}}{b_{3}+
\cds }}}
\end{equation}
} be denoted by $A_{n}/B_{n}$ ($A_{n}$ is the $n$-th
\emph{numerator} convergent and $B_{n}$ is the $n$-th
\emph{denominator} convergent) and let
\[S_{n}(w)= \frac{A_{n}+wA_{n-1}}{B_{n}+wB_{n-1}}.
\]
Define the chordal metric $d$ on $\hat{\mathbb{C}}$ by
{\allowdisplaybreaks
\begin{equation*}
d(w,z)=\frac{|z-w|}
{\sqrt{1+|w|^{2}}\sqrt{1+|z|^{2}}}
\end{equation*}
} when $w$ and $z$ are both finite, and
\[
d(w, \infty) = \frac{1}{\sqrt{1+|w|^{2}}}.
\]

\textbf{Definition:} The continued fraction
$M$ is said to \emph{converge generally} to
$f \in \hat{\mathbb{C}}$ if there exist sequences
$\{v_{n}\}$, $\{w_{n}\} \subset \hat{\mathbb{C}}$ such that
$\liminf d(v_{n},w_{n})>0$ and
\[
\lim_{n \to \infty}S_{n}(v_{n})=\lim_{n \to \infty}S_{n}(w_{n}) = f.
\]
Remark: Jacobson shows in \cite{J86} that, if a continued fraction converges in
the general sense, then the limit is unique.

The idea of general convergence is of great significance because
classical convergence implies general convergence (take $v_{n}=0$
and $w_{n}= \infty$, for all $n$), but the converse does not
necessarily hold. General convergence is a natural extension of
the concept of classical convergence for continued fractions.

The \emph{even} part of the continued fraction $M$ at \eqref{Meq}
 is the continued fraction whose
$n$-th numerator (denominator) convergent equals $A_{2n}$
($B_{2n}$), for $n \geq 0$. The \emph{odd} part of $M$ is the
continued fraction whose zero-th numerator convergent is
$A_{1}/B_{1}$, whose zero-th denominator convergent is $1$, and
whose $n$-th numerator (respectively denominator) convergent
equals $A_{2n+1}$ (respectively $B_{2n+1}$),  for $n \geq 1$.

In this present paper we investigate the general convergence of
continued fractions whose odd and even parts each converge, but to
different values. Such continued fractions may or may not converge
in the general sense as the following examples show.

\begin{example} Let
\begin{equation}\label{rrcf}
K(q) = 1+ \frac{q}{1}
\+
\frac{q^{2}}{1}
\+
\frac{q^{3}}{1}
\+
\cds
\+
\frac{q^{n}}{1}
\+ \cds.
\end{equation}
If $|q|>1$ then the odd and even parts of $K(q)$ converge
but $K(q)$ does not converge generally.
\end{example}
The continued fraction $K(q)$ is the famous Rogers-Ramanujan
continued fraction. It was stated without proof by Ramanujan that,
if $|q|>1$, then the odd  part of $K(q)$ converges to $1/K(-1/q)$
and the even part converges to $q\,K(1/q^{4})$
 (See Entry 59 of \cite{ABJL92} for a proof of
Ramanujan's claim). However,  $K(q)$ is easily seen
to be equivalent to the following continued fraction:
\begin{equation*}
\hat{K}(q) :=  1+ \frac{1}{1/q} \+ \frac{1}{1/q} \+
\frac{1}{1/q^{2}} \+ \frac{1}{1/q^{2}} \+ \cds \+ \frac{1}{1/q^{n}}
\+ \frac{1}{1/q^{n}} \+ \cds.
\end{equation*}
It is an easy consequence of the
Stern-Stolz Theorem below, as extended by Lorentzen and Waadeland,
that this continued fraction does not converge in the general
sense for any $q$ outside the unit circle.
{\allowdisplaybreaks
\begin{example} Let
\[
G:=\frac{2}{1} \+
 \frac{-1}{2}
\+ K_{n=3}^{\infty} \frac{a_{n}}{b_{n}},
\]
where
{\allowdisplaybreaks
\begin{align*}
a_{2n+1}&= 1 + \frac{1}{2\,n^2} + \frac{1}{n},&
b_{2n+1}&=\frac{-1}{2\,n^3},&\\
a_{2n+2}&=\frac{2\,
      {\left( 1 + n \right) }^3}{n\,
      \left( 1 + 2\,n + 2\,n^2 \right) },&
b_{2n+2}&=\frac{1 + n}{1 + 2\,n + 2\,n^2}.&
\end{align*}
}
Then the odd and even parts of $G$ tend to different values and
$G$ converges in the general sense.
\end{example}
}
\begin{proof}
 It is easy to check that the numerators $A_{n}$ and denominators
$B_{n}$ satisfy
 \begin{align*}
A_{2n-1}&=n+1, &A_{2n} &= n + 3 n^{2},\\
B_{2n-1}&=n,   &B_{2n} &= n^{2}.
\end{align*}
Thus the odd approximants tend to 1 and the even approximants tend
to 3.  Observe
that
\begin{align*}
\frac{A_{2n}+w_{2n}A_{2n-1}}{B_{2n}+w_{2n}B_{2n-1}} &=
\frac{n + 3\,n^2 + \left( 1 + n \right) \,w_{2n}}{n^2 + n\,w_{2n}},\\
&\phantom{as}\\
\frac{A_{2n+1}+w_{2n+1}A_{2n}}{B_{2n+1}+w_{2n+1}B_{2n}} &=
\frac{2+n +  \left( n +3\, n^{2} \right) \,w_{2n+1}}{1+n +
n^{2}\,w_{2n+1}}.
\end{align*}
Each of these expressions converges to 3, when, for example,
 $\{w_{n}\}$ is the
constant sequence with value 1 and when it is the constant
sequence with value 2. Thus the continued fraction converges
generally to 3.
\end{proof}

It is therefore desirable to have criteria, based on the partial
quotients of a continued fraction, for determining whether a
continued fraction  whose odd and even parts converge diverges in
the general sense.

 An example of a theorem on divergence in the general sense is the
Stern-Stolz Theorem, as extended by Lorentzen and Waadeland.
\begin{theorem}\label{STT}
$($\textbf{The Stern-Stolz Theorem} $($\cite{LW92}, p.94$))$ The
continued fraction  $b_{0}$ $+$ $K_{n=1}^{\infty}1/b_{n}$
diverges generally
 if $\sum |b_{n}| < \infty$. In fact,
\[
\lim_{n\to \infty}A_{2n+p}=P_{p}\not = \infty, \hspace{25pt}
\lim_{n\to \infty}B_{2n+p}=Q_{p}\not = \infty,
\]
for $p=0,1$, where
\[
P_{1}Q_{0}-P_{0}Q_{1}=1.
\]
\end{theorem}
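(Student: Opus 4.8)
The plan is to establish the stronger ``in fact'' assertions first---that the even- and odd-indexed numerators and denominators each converge to finite limits satisfying $P_1Q_0-P_0Q_1=1$---and then to deduce general divergence from them. Since every partial numerator here equals $1$, both $\{A_n\}$ and $\{B_n\}$ obey the single recurrence $X_n=b_nX_{n-1}+X_{n-2}$ (with $A_{-1}=1$, $A_0=b_0$, $B_{-1}=0$, $B_0=1$), and the standard determinant identity specialises to
\[
A_nB_{n-1}-A_{n-1}B_n=(-1)^{n-1},\qquad n\ge 0.
\]

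First I would prove that $\{A_n\}$ and $\{B_n\}$ are bounded. Writing $C_n=\max(|A_n|,|A_{n-1}|)$, the recurrence gives $|A_n|\le |b_n|\,|A_{n-1}|+|A_{n-2}|\le(1+|b_n|)C_{n-1}$, so $C_n\le(1+|b_n|)C_{n-1}$ and hence $C_n\le C_0\prod_{k=1}^n(1+|b_k|)\le C_0\exp\big(\sum_{k\ge1}|b_k|\big)<\infty$; the same argument bounds $\{B_n\}$. Next, subtracting two steps of the recurrence yields $A_n-A_{n-2}=b_nA_{n-1}$, whence
\[
\sum_n|A_n-A_{n-2}|=\sum_n|b_n|\,|A_{n-1}|\le\Big(\sup_m|A_m|\Big)\sum_n|b_n|<\infty .
\]
Thus the even- and odd-indexed subsequences of $\{A_n\}$ each have absolutely summable consecutive differences, so they converge to finite limits $P_0,P_1$; likewise $\{B_n\}$ splits into convergent subsequences with finite limits $Q_0,Q_1$. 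Letting $n=2m+1\to\infty$ in the determinant identity gives $P_1Q_0-P_0Q_1=1$, completing the ``in fact'' part.

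Finally I would deduce general divergence. From $S_n(w)=(A_n+wA_{n-1})/(B_n+wB_{n-1})$ and the limits just found, the even approximants converge to the Möbius transformation $\tau_0(w)=(P_0+wP_1)/(Q_0+wQ_1)$, whose determinant $P_1Q_0-P_0Q_1=1$ is nonzero, so $\tau_0$ is a homeomorphism of $\hat{\mathbb{C}}$. Because the coefficients converge and the limiting determinant is nonzero, $S_{2m}\to\tau_0$ uniformly in the chordal metric, and the inverses satisfy $S_{2m}^{-1}\to\tau_0^{-1}$ uniformly as well. Suppose, for contradiction, that the continued fraction converged generally via sequences $\{v_n\},\{w_n\}$ with $\liminf d(v_n,w_n)>0$ and $S_n(v_n),S_n(w_n)\to f$. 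Writing $v_{2m}=S_{2m}^{-1}\big(S_{2m}(v_{2m})\big)$ and combining the uniform convergence of $S_{2m}^{-1}$ with the continuity of $\tau_0^{-1}$ forces $v_{2m}\to\tau_0^{-1}(f)$, and identically $w_{2m}\to\tau_0^{-1}(f)$; hence $d(v_{2m},w_{2m})\to0$, contradicting $\liminf d(v_n,w_n)>0$.

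I expect the main obstacle to be the boundedness step and the passage from it to summability, since everything downstream---the determinant limit and the general-divergence argument---is essentially formal once the four subsequential limits are known to be finite and $\tau_0$ is known to be nonsingular. The one point needing care in the last paragraph is that convergence of Möbius maps with a nonzero limiting determinant is automatically \emph{uniform} in the chordal metric (and passes to inverses); it is this uniformity, rather than mere pointwise convergence, that lets the chordally separated sequences $v_{2m}$ and $w_{2m}$ be squeezed together.
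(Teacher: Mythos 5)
The paper does not prove this statement: it is quoted verbatim as a known result, with a citation to Lorentzen and Waadeland, so there is no internal proof to compare against. Judged on its own, your argument is correct and is essentially the standard proof of the extended Stern--Stolz theorem. The bound $C_n\le C_0\prod(1+|b_k|)$, the telescoping identity $A_n-A_{n-2}=b_nA_{n-1}$ giving absolutely summable differences along each parity class, and the passage to the limit in $A_nB_{n-1}-A_{n-1}B_n=(-1)^{n-1}$ correctly deliver the ``in fact'' clause, including $P_1Q_0-P_0Q_1=1$. The deduction of general divergence is also sound: since each $S_{2m}$ has determinant $-1$ and its coefficients converge to those of the nonsingular map $\tau_0$, the convergence $S_{2m}\to\tau_0$ and $S_{2m}^{-1}\to\tau_0^{-1}$ is uniform in the chordal metric, which forces $v_{2m}$ and $w_{2m}$ both to $\tau_0^{-1}(f)$ and contradicts $\liminf d(v_n,w_n)>0$. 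You are right that the uniformity of convergence for M\"obius maps with nonsingular limit is the one nontrivial ingredient in the last step; it is a standard fact (the identification of such maps with $\mathrm{PSL}(2,\mathbb{C})$ is a homeomorphism for the topology of uniform chordal convergence), but in a written version it deserves either a short proof or an explicit reference, since pointwise convergence alone would not suffice to squeeze the two chordally separated sequences together.
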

However, if a continued fraction is not already of the form
$K_{n=1}^{\infty}1/b_{n}$, these $b_{n}$  may become quite
complicated once an equivalence transformation is applied to the
continued fraction to bring it to this form and it may not be so
easy to determine if the series $\sum |b_{n}|$ converges.

In the present paper, we prove a theorem which gives a simple
criterion, based on the partial quotients, for deciding if a continued fraction
diverges in the general sense, provided it is known that the
odd- and even parts converge and
whether these limits are equal.

We then apply our theorem to two classes of $q$-continued fraction
described in our paper \cite{BML03} to show that if $|q_{1}|>1$
and $H(q)$ is a continued fraction in either class, then either
$H(q_{1})$ converges or does not converge generally.

\section{A Theorem on Divergence in the General Sense }

We now prove the following theorem.
{\allowdisplaybreaks
\begin{theorem}\label{P:pr2}
Let the odd and even parts of the continued fraction $C = b_{0} +
K_{n=1}^{\infty}a_{n}/b_{n}$ converge to different limits. Further
suppose that there exist positive constants $c_{1}$, $c_{2}$ and
$c_{3}$ such that, for $i \geq 1$, {\allowdisplaybreaks
\begin{align}\label{con1}
c_{1} \leq |b_{i}| \leq c_{2}
\end{align}
} and {\allowdisplaybreaks
\begin{align}\label{con2a}
\left|\frac{a_{2i+1}}{a_{2i}}\right| \leq c_{3}.
\end{align}
} Then $C$ does not converge generally.
\end{theorem}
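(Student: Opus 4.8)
The plan is to argue by contradiction. Suppose $C$ converges generally to some $f\in\hat{\mathbb C}$, so there are sequences $\{v_n\},\{w_n\}$ with $\liminf d(v_n,w_n)>0$ and $S_n(v_n),S_n(w_n)\to f$. Write $g_n=A_n/B_n$, $\rho_n=B_n/B_{n-1}$ and $h_n=1/\rho_n$; by hypothesis $g_{2m}\to L_e$ and $g_{2m+1}\to L_o$ with $L_e\neq L_o$. Since $S_n(0)=g_n$ and $S_n(\infty)=g_{n-1}$, invariance of the cross ratio under the M\"obius map $S_n$ gives $(v_n,w_n;0,\infty)=(S_n(v_n),S_n(w_n);g_n,g_{n-1})$. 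If $f$ differed from both $L_e$ and $L_o$, the right-hand side would tend to $1$ along each parity, forcing $v_n/w_n\to1$ and hence $d(v_n,w_n)\to0$, a contradiction. Thus $f\in\{L_e,L_o\}$, and by symmetry I may first take $f=L_e$.

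The crucial step is to convert general convergence into the behaviour of the denominator ratios $\rho_n$. From $S_n(w)=(g_n+wg_{n-1}h_n)/(1+wh_n)$ one computes $S_n(w)-f=[(g_n-f)+wh_n(g_{n-1}-f)]/(1+wh_n)$. Along even $n$ we have $g_n-f\to0$ while $g_{n-1}-f\to L_o-L_e\neq0$, so $S_{2m}(w_{2m})\to f$ forces $w_{2m}h_{2m}\to0$; along odd $n$ the roles of $g_n,g_{n-1}$ interchange and $S_{2m+1}(w_{2m+1})\to f$ forces $w_{2m+1}h_{2m+1}\to\infty$, and the same holds for $\{v_n\}$. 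Applying $\liminf d(v_n,w_n)>0$ to each parity separately rules out any subsequence on which $|h_{2m}|$ stays bounded below or $|h_{2m+1}|$ stays bounded above, whence $h_{2m}\to0$ and $h_{2m+1}\to\infty$; that is, $\rho_{2m}\to\infty$ and $\rho_{2m+1}\to0$. I expect this to be the main obstacle: one must argue carefully that two sequences forced to share a limit ($0$ or $\infty$) for $w_nh_n$ yet kept chordally apart compel the stated behaviour of $h_n$, and the case $f=\infty$ has to be handled through the chordal metric (or after a fixed M\"obius normalisation) rather than by subtracting $f$.

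The recurrence $B_n=b_nB_{n-1}+a_nB_{n-2}$ gives $\rho_n=b_n+a_n/\rho_{n-1}$, i.e.\ $a_n=\rho_{n-1}(\rho_n-b_n)$, so that
\[
\left|\frac{a_{2m+1}}{a_{2m}}\right|
=\frac{|\rho_{2m}|\,\bigl|\rho_{2m+1}-b_{2m+1}\bigr|}{|\rho_{2m-1}|\,\bigl|\rho_{2m}-b_{2m}\bigr|}.
\]
Invoking \eqref{con1}, the relation $\rho_{2m}\to\infty$ yields $|\rho_{2m}-b_{2m}|\le|\rho_{2m}|+c_2$, while $\rho_{2m+1}\to0$ together with $|b_{2m+1}|\ge c_1$ yields $|\rho_{2m+1}-b_{2m+1}|\ge c_1-|\rho_{2m+1}|$. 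Dividing numerator and denominator by $|\rho_{2m}|$ and letting $m\to\infty$, and recalling $\rho_{2m-1}\to0$, the right-hand side tends to $+\infty$, contradicting \eqref{con2a}. This is the step where both hypotheses are genuinely used, and it does not require the limits to be finite.

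For the alternative $f=L_o$ the same analysis forces instead $\rho_{2m}\to0$ and $\rho_{2m+1}\to\infty$. When $L_e,L_o$ are finite I would dispatch this (and indeed the previous case as well) through the determinant identity $g_n-g_{n-1}=(-1)^{n-1}\prod_{k=1}^{n}a_k/(B_nB_{n-1})$: convergence of the two parts to distinct finite values gives $a_n=\rho_{n-1}\rho_n\,(1+o(1))$, and comparison with $a_n=\rho_{n-1}(\rho_n-b_n)$ forces $b_n=o(\rho_n)$ for every $n$, which is incompatible with $\rho_{2m}\to0$ and $|b_{2m}|\ge c_1$. The one genuinely delicate point, which I flag as the second obstacle, is the remaining configuration in which the relevant limit is infinite and $\rho_{2m}\to0$: here the determinant identity degenerates, and one must instead run the recurrence estimate of the third paragraph in reciprocal form, using \eqref{con1} and \eqref{con2a} to force the same quantitative contradiction. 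In all cases the hypotheses are violated, so $C$ cannot converge generally.
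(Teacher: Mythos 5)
Your proposal is correct in its main line and that main line is exactly the paper's: from general convergence to $f=L_e$ you force $\rho_{2m}=B_{2m}/B_{2m-1}\to\infty$ and $\rho_{2m+1}\to 0$, and then the recurrence $a_n=\rho_{n-1}(\rho_n-b_n)$ together with \eqref{con1} makes $|a_{2m+1}/a_{2m}|$ blow up, contradicting \eqref{con2a}; the paper phrases the same contradiction as ``the left side tends to $1$, the right side tends to $0$.'' You depart from the paper in two places. First, you pin $f$ to $\{L_e,L_o\}$ by cross-ratio invariance, which absorbs in one stroke both the paper's explicit computation showing $d(v_{2n},w_{2n})\to 0$ when $f\neq f_2$ and its separate $f=\infty$ case. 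Second, and more substantively, for $f=L_o$ the paper only says ``we proceed similarly,'' while you correctly observe that the situation is \emph{not} symmetric: there $\rho_{2m}\to 0$ and $\rho_{2m+1}\to\infty$, the recurrence identity produces an $\infty/\infty$ form, and \eqref{con2a} bounds only $a_{2i+1}/a_{2i}$, not $a_{2i+2}/a_{2i+1}$, so the direct analogue does not close. Your determinant-identity detour is a genuine and valid repair when $L_e,L_o$ are finite: since $(g_{n+1}-g_n)/(g_n-g_{n-1})\to -1$ and this ratio equals $-a_{n+1}/(\rho_n\rho_{n+1})$, one gets $a_{n+1}=\rho_n\rho_{n+1}(1+o(1))$ as complex numbers, hence $b_n=o(\rho_n)$, hence $|\rho_n|\to\infty$ for every $n$ by the lower bound in \eqref{con1} --- incompatible with $\rho_{2m}\to 0$ (and this shows \eqref{con2a} is not even needed in that case). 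The one configuration you leave as a sketch (an infinite sublimit) is also untreated by the paper, whose formulas $A_{2n}=B_{2n}(f_2+\alpha_{2n})$ tacitly assume both sublimits finite, so your write-up is no less complete than the published proof on that point.
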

} Remark: It might seem that Condition \ref{con1} prevents the
application of this theorem to  continued fractions $K_{n=1}^{\infty}a_{n}/b_{n}$ in which the
$b_{n}$ become unbounded but a similarity transformation to put
the continued fraction in the form $b_{0}+K_{n=1}^{\infty}c_{n}/1$
removes this difficulty.

\emph{Proof of Theorem \ref{P:pr2}}.
Let the $i$-th approximant of
$C=b_{0} + K_{n=1}^{\infty}a_{n}/b_{n}$  be
denoted by $A_{i}/B_{i}$.
Suppose the odd approximants tend to $f_1$ and that
the even approximants tend to $f_{2}$. Further suppose that
$C$ converges generally to $f \in \hat{\mathbb{C}}$ and that
$\{v_{n}\}$, $\{w_{n}\} \subset \hat{\mathbb{C}}$  are
two sequences such that
\begin{align*}
\lim_{n \to \infty}\frac{A_{n}+v_{n}A_{n-1}}
     {B_{n}+v_{n}B_{n-1}}=
\lim_{n \to \infty}\frac{A_{n}+w_{n}A_{n-1}}
     {B_{n}+w_{n}B_{n-1}}= f
\end{align*}
and
\begin{align*}
\liminf_{n \to \infty} d(v_{n},w_{n}) > 0.
\end{align*}
It will be shown that these two conditions lead to a contradiction.
Suppose first that $|f|< \infty$ and, without loss of generality,
that $f \not = f_{1}$. (If $f = f_{1}$, then  $f \not = f_{2}$
and we proceed similarly).
We write
\begin{align*}
\frac{A_{n}+w_{n}A_{n-1}}
     {B_{n}+w_{n}B_{n-1}}= f+ \gamma_{n},\phantom{asdsad}
\frac{A_{n}+v_{n}A_{n-1}}
     {B_{n}+v_{n}B_{n-1}}= f + \gamma_{n}^{'},
\end{align*}
where $ \gamma_{n} \to 0$ and $ \gamma_{n}^{'} \to 0$ as
$n \to \infty$.
 By assumption it follows that
$A_{2n} = B_{2n}(f_2 + \alpha_{2n})$ and
$A_{2n+1} = B_{2n+1}(f_1 + \alpha_{2n+1})$, where
$\alpha_{i} \to 0$ as $i \to \infty$. Then
\begin{align*}
\frac{A_{2n}+w_{2n}A_{2n-1}}
     {B_{2n}+w_{2n}B_{2n-1}} &=
\frac{B_{2n}(f_2 + \alpha_{2n}) +w_{2n}B_{2n-1}(f_1 + \alpha_{2n-1})}
     {B_{2n}+w_{2n}B_{2n-1}} \\
&= f + \gamma_{2n}.
\end{align*}
By simple algebra we have
\begin{align*}
w_{2n}=\frac{{B_{2\,n}}\,\left( -f + {f_2} + {{\alpha }_{2\,n}} -
      {{\gamma }_{2\,n}} \right) }{{B_{ 2\,n-1}}\,
    \left( f - {f_1} - {{\alpha }_{ 2\,n-1}} +
      {{\gamma }_{2\,n}} \right) }.
\end{align*}
Similarly,
\begin{align*}
v_{2n}=\frac{{B_{2\,n}}\,\left( -f + {f_2} + {{\alpha }_{2\,n}} -
      {{\gamma }_{2\,n}^{'}} \right) }{{B_{ 2\,n-1}}\,
    \left( f - {f_1} - {{\alpha }_{ 2\,n-1}} +
      {{\gamma }_{2\,n}^{'}} \right) }.
\end{align*}
Note that $B_{2\,n}$, $B_{2\,n-1} \not = 0$ for $n$ sufficiently
large, since the odd and even parts of the continued fraction
converge. If $f \not = f_{2}$, then
\begin{align*}
\lim_{n \to \infty}d(v_{2n},w_{2n}) \leq
\lim_{n \to \infty}\frac{|v_{2n} - w_{2n}|}{|w_{2n}|} =0.
\end{align*}
Hence $f = f_{2}$,
\begin{align*}
w_{2n}=\frac{{B_{2\,n}}\,\left( {{\alpha }_{2\,n}} -
      {{\gamma }_{2\,n}} \right) }{{B_{ 2\,n-1}}\,
    \left( f - {f_1} - {{\alpha }_{ 2\,n-1}} +
      {{\gamma }_{2\,n}} \right) }
\end{align*}
and
\begin{align*}
v_{2n}=\frac{{B_{2\,n}}\,( {{\alpha }_{2\,n}} -
      {{\gamma }_{2\,n}^{'}} ) }{{B_{ 2\,n-1}}\,
    \left( f - {f_1} - {{\alpha }_{ 2\,n-1}} +
      {{\gamma }_{2\,n}^{'}} \right) }.
\end{align*}
 Now we show  that
\[
\lim_{n \to \infty} \left|\frac{B_{2n}}{B_{2n-1}}\right| = \infty.
\]
For if not, then there
is a sequence $\{n_{i}\}$ and a positive constant $M$ such that
 $|B_{2n_{i}}/B_{2n_{i}-1}| \leq M$ for all $n_{i}$, and then
\begin{align*}
&\lim_{i \to \infty} d(v_{2n_{i}},w_{2n_{i}})
\leq \lim_{i \to \infty} \left| v_{2n_{i}}-w_{2n_{i}}
\right|\\
&\leq\lim_{i \to \infty} M\left|\frac{ {{\alpha }_{2\,n_{i}}} -
      {{\gamma }_{2\,n_{i}}^{'}}  }{
     f - {f_1} - {{\alpha }_{ 2\,n_{i}-1}} +
      {{\gamma }_{2\,n_{i}}^{'}}  }-
\frac{ {{\alpha }_{2\,n_{i}}} -
      {{\gamma }_{2\,n_{i}}}  }{
     f - {f_1} - {{\alpha }_{ 2\,n_{i}-1}} +
      {{\gamma }_{2\,n_{i}}}  }
\right|=0.
&\phantom{as}
\end{align*}
Similarly, after substituting $f_{2}$ for $f$, we have that
\begin{align*}
w_{2n+1} = \frac{B_{2n+1}}{B_{2n}}
\left( \frac{f_{1}-f_{2} + \alpha_{2n+1} -\gamma_{2n+1}}
{\gamma_{2n+1}-\alpha_{2n}}\right)
\end{align*}
and
\begin{align*}
v_{2n+1} = \frac{B_{2n+1}}{B_{2n}}
\left( \frac{f_{1}-f_{2} + \alpha_{2n+1} -\gamma_{2n+1}^{'}}
{\gamma_{2n+1}^{'}-\alpha_{2n}}\right).
\end{align*}
We now show that
\[
\lim_{n \to \infty} \left|\frac{B_{2n+1}}{B_{2n}}\right| = 0.
\]
If not, then there
is a sequence $\{n_{i}\}$ and some $M>0$ such that
 $|B_{2n_{i}+1}/B_{2n_{i}}|$ $\geq M$ for all $n_{i}$. Then
$\lim_{i \to \infty} w_{2n_{i}+1}=\lim_{i \to \infty} v_{2n_{i}+1}
= \infty$ and $\lim_{i \to \infty}$ $ d(v_{2n_{i}+1},w_{2n_{i}+1})
=0$.

Finally, we show that it is impossible to have both
$\lim_{n \to \infty}|B_{2n+1}/B_{2n}| = 0$ and
$\lim_{n \to \infty}$ $|B_{2n}/B_{2n-1}| = \infty$. For ease of
notation let
$B_{n}/B_{n-1}$ be denoted by $r_{n}$, so that
$r_{2n} \to \infty$ and $r_{2n+1} \to 0$, as $n \to \infty$.
From the recurrence relations for the $B_{i}$'s, namely,
$B_{i}=b_{i}B_{i-1}+a_{i}B_{i-2}$,
  we have
\begin{align*}
r_{2n}(r_{2n+1}-b_{2n+1}) &= a_{2n+1}
\end{align*}
and
{\allowdisplaybreaks
\begin{align*}
r_{2n-1}(r_{2n}-b_{2n}) &= a_{2n}.
\end{align*}
}
Thus
{\allowdisplaybreaks
\begin{align*}
\frac{r_{2n}}{r_{2n}-b_{2n}}
&=\frac{a_{2n+1}r_{2n-1}}{a_{2n}(r_{2n+1}-b_{2n+1})},
\end{align*}
}
and by \eqref{con1} and \eqref{con2a}
the left side tends to 1 and the right side tends to 0,
as $n \to \infty$, giving the
required contradiction.

If $f = \infty$, then we write
\begin{align*}
\frac{A_{n}+w_{n}A_{n-1}}
     {B_{n}+w_{n}B_{n-1}}&=\frac{1}{\gamma_{n}},\\
&\phantom{as}\\
\frac{A_{n}+v_{n}A_{n-1}}
     {B_{n}+v_{n}B_{n-1}}&=\frac{1}{\gamma_{n}'},
\end{align*}
where
$\lim_{n \to \infty}\gamma_{n} = \lim_{n \to \infty}\gamma_{n}' =0$.
With the $\alpha_{i}$'s
as above we find that
\begin{align*}
w_{2n} = - \frac{{B_{2\,n}}\,
      \left( -1 + {f_2}\,{{\gamma }_{2\,n}} +
        {{\alpha }_{2\,n}}\,{{\gamma }_{2\,n}} \right) }
      {{B_{ 2\,n-1}}\,\left( -1 +
        {f_1}\,{{\gamma }_{2\,n}} +
        {{\alpha }_{ 2\,n+1}}\,{{\gamma }_{2\,n}} \right) }
\end{align*}
and
\begin{align*}
v_{2n} = - \frac{{B_{2\,n}}\,
      \left( -1 + {f_2}\,{{\gamma }_{2\,n}^{'}} +
        {{\alpha }_{2\,n}}\,{{\gamma }_{2\,n}^{'}} \right) }
      {{B_{ 2\,n-1}}\,\left( -1 +
        {f_1}\,{{\gamma }_{2\,n}^{'}} +
        {{\alpha }_{ 2\,n+1}}\,{{\gamma }_{2\,n}^{'}} \right) }.
\end{align*}
In this case it follows easily that
$\lim_{n \to \infty}d(w_{2n}, v_{2n}) = 0$.
\begin{flushright}
$\Box$
\end{flushright}

\section{Application to $q$-Continued Fractions}
In \cite{BML03}, one type of continued fraction we considered was of the form
{\allowdisplaybreaks
\begin{align*}
G(q):&=1 +
K_{n=1}^{\infty}\frac{a_{n}(q)}{1}:=1+
\frac{f_{1}(q^{0})}{1}
\+\cds \+
\frac{f_{k}(q^{0})}{1}\\
&\+
 \frac{f_{1}(q^{1})}{1}
\+\cds \+
\frac{f_{k}(q^{1})}{1}
\+\cds \+
\frac{f_{1}(q^{n})}{1}
\+ \cds\+
\frac{f_{k}(q^{n})}{1} \+\cds , \notag
\end{align*}
}
where $f_{s}(x)\in \mathbb{Z}[q][x]$, for $1 \leq s
\leq k$. Thus, for $n\geq 0$ and $1 \leq s \leq k$,
{\allowdisplaybreaks
\begin{align}\label{con4}
&a_{nk+s}(q)=f_{s}(q^{n}).&
\end{align}
}
Many well-known $q$-continued fractions, including the
Rogers-Ramanujan continued fraction at  \eqref{rrcf} and the three Ramanujan-Selberg
continued fractions studied by Zhang in \cite{Z91}, namely,
{\allowdisplaybreaks
\begin{align*}
S_{1}(q):= 1 + \frac{q}{1}
\+
 \frac{q+q^{2}}{1}
\+
 \frac{q^{3}}{1}
\+
 \frac{q^{2}+q^{4}}{1}
\+
\cds ,
\end{align*}
}
\begin{align*}
S_{2}(q):=
1 +
 \frac{q+q^{2}}{1}
\+
 \frac{q^{4}}{1}
\+
 \frac{q^{3}+q^{6}}{1}
\+
 \frac{q^{8}}{1}
\+
\cds ,
\end{align*}
and
\begin{align*}
S_{3}(q):=
1 +
 \frac{q+q^{2}}{1}
\+
 \frac{q^{2}+q^{4}}{1}
\+
 \frac{q^{3}+q^{6}}{1}
\+
 \frac{q^{4}+q^{8}}{1}
\+
\cds ,
\end{align*}
\begin{flushleft}
are of this form, with $k$ at most 2.
Following the example of these four continued fractions,
we made the additional assumptions that,
 for $i \geq 1$,
\end{flushleft}
\begin{align}\label{con2}
 \text{degree}(a_{i+1}(q))= \text{degree}(a_{i}(q))+ C_{3}, \\
&\phantom{as} \notag
\end{align}
where $C_{3}$ is a fixed positive integer, and
that
 all of the polynomials $a_{n}(q)$ had the same
leading coefficient.  The odd- and even parts of each of the four
continued fractions above converge for $|q|>1$, (see \cite{ABJL92}
and \cite{Z91}, where the authors also determined the limits).  In
\cite{BML03}, we extended these results on convergence outside the
unit circle to the  class of continued fractions described above.
We proved the following theorem \cite{BML03}:
\begin{theorem}\label{T4}\cite{BML03}
 Suppose $G(q) = 1 + K_{n=1}^{\infty}a_{n}(q)/1$
is such that the $a_{n}:= a_{n}(q)$ satisfy \eqref{con4} and
 \eqref{con2}. Suppose further that each $a_{n}(q)$ has the same
 leading coefficient. If  $|q|>1$ then the odd and
even parts of $G(q)$ both converge.
\end{theorem}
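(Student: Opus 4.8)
The plan is to treat the even and odd parts of $G(q)$ as continued fractions in their own right and to prove that each converges by realizing it as a limit-periodic continued fraction whose limiting period possesses an attractive fixed point. First I would record the standard even- and odd-part formulas for $1+K_{n=1}^{\infty}a_n/1$. Checking against the recurrences $A_m=A_{m-1}+a_mA_{m-2}$, $B_m=B_{m-1}+a_mB_{m-2}$, the even part is
\[
1+ \frac{a_1}{1+a_2}\- \frac{a_2a_3}{1+a_3+a_4}\- \frac{a_4a_5}{1+a_5+a_6}\- \cds ,
\]
and the odd part is $1+a_1$ followed by the analogous tail beginning $\-\,a_1a_2/(1+a_2+a_3)$. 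An equivalence transformation normalizes the even part to $K(C_n/1)$ with $C_n=-a_{2n-2}a_{2n-1}/[(1+a_{2n-3}+a_{2n-2})(1+a_{2n-1}+a_{2n})]$, and the odd part to a similar form with the roles of even- and odd-indexed $a$'s exchanged.

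The hypotheses drive everything through a single estimate on consecutive partial quotients. Writing $a_{nk+s}(q)=f_s(q^n)$, for fixed $|q|>1$ one checks that $a_m(q)/q^{\deg a_m}$ tends to a constant $\lambda_s$ (nonzero for the $q$ at hand) depending only on the residue $s$ of $m$ modulo $k$, while \eqref{con2} gives $\deg a_{m+1}-\deg a_m=C_3>0$; hence $|a_m/a_{m+1}|\to |q|^{-C_3}\,|\lambda_s/\lambda_{s+1}|$. Substituting into $C_n$, the dominant factors $a_{2n-2}$ and $a_{2n}$ cancel the leading parts of the two denominators, so that $C_n$ converges, along each residue class, to a value of the form $-\sigma/[(1+\sigma')(1+\sigma)]$ with $|\sigma|,|\sigma'|$ of order $|q|^{-C_3}<1$; the odd part behaves identically. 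Both normalized continued fractions are therefore limit-$p$-periodic with $p\mid k$.

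It then remains to invoke the convergence theorem for limit-periodic continued fractions: convergence follows once the limiting periodic continued fraction itself converges, i.e.\ once the product of the period matrices has two eigenvalues of distinct modulus (an attractive fixed point). In the period-one case ($k=1$, as for the Rogers--Ramanujan fraction) this is transparent: with $t=q^{-C_3}$ the limit value is $-t/(1+t)^2$, whose two fixed points $-t/(1+t)$ and $-1/(1+t)$ have moduli separated by the factor $|t|<1$, so the value never meets the critical ray $(-\infty,-1/4]$. The main obstacle is exactly this eigenvalue check for genuine period $k$: I would argue that the period matrix is a product of $k$ factors each contributing a contraction of size $|q|^{-C_3}$, while the residue-dependent constants $\lambda_s$ telescope over a full period because all the $a_n$ share a common leading coefficient (so $\prod_s(\lambda_{s-1}/\lambda_s)=1$); the two eigenvalue moduli are then separated by a factor of order $|q|^{-kC_3}\ne 1$.

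As a more hands-on alternative one can bypass the limit-periodic machinery. The determinant identity gives
\[
\left|\frac{A_{2n}}{B_{2n}}-\frac{A_{2n-2}}{B_{2n-2}}\right|=\frac{\prod_{i=1}^{2n-1}|a_i|}{|B_{2n}|\,|B_{2n-2}|},
\]
and a degree-counting induction on $B_m=B_{m-1}+a_mB_{m-2}$ shows, using $C_3>0$ to make $a_mB_{m-2}$ outweigh $B_{m-1}$, that $|B_{2n}|\asymp\prod_{i=1}^{n}|a_{2i}|$ and $|B_{2n+1}|\asymp\prod_{i=1}^{n}|a_{2i+1}|$. Substituting collapses the even-part difference to $\prod_{i=1}^{n}|a_{2i-1}/a_{2i}|\asymp|q|^{-nC_3}$ and the odd-part difference likewise to order $|q|^{-nC_3}$, both summable, so each sequence of convergents is Cauchy. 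On this route the obstacle reappears as the need to upgrade the degree-level domination of the leading term of $B_m(q)$ to a genuine lower bound on its modulus at the fixed value $q$, which the separation $|q|^{-C_3}<1$ again supplies.
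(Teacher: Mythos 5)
The first thing to note is that the paper you were given never proves Theorem~\ref{T4}: it is imported verbatim from \cite{BML03}, and the only methodological clue here is the remark that Theorems~\ref{T4} and~\ref{T:p2} ``were derived from theorems on limit--periodic continued fractions.'' Your primary route --- contract $G(q)$ to its even and odd parts, normalize each to the form $K(C_n/1)$, and appeal to limit-periodic convergence theory --- is therefore the same strategy the paper attributes to its source. Your even/odd-part formulas, the computation $C_n\to -\sigma/((1+\sigma')(1+\sigma))$, and the complete fixed-point analysis in the period-one case (fixed points $-t/(1+t)$ and $-1/(1+t)$ with modulus ratio $|t|<1$) are all correct.

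The problem is the step you yourself call ``the main obstacle,'' and it is a genuine gap rather than a routine verification. For period $k>1$ you must show that the product of the period matrices $\bigl(\begin{smallmatrix}0&c_j\\1&1\end{smallmatrix}\bigr)$ is loxodromic, i.e.\ has eigenvalues of distinct moduli, \emph{for every} $|q|>1$. Knowing that the determinant of the product, $\prod_j(-c_j)$, has modulus about $|q|^{-kC_3}$ after the $\lambda_s$ telescope does not separate the eigenvalue moduli: if the trace is simultaneously small, of order $|\!\det|^{1/2}$, the two eigenvalues can have equal modulus and the limiting periodic fraction is elliptic, hence divergent. Your ``each factor contributes a contraction of size $|q|^{-C_3}$'' picture amounts to saying each matrix is near the idempotent $\bigl(\begin{smallmatrix}0&0\\1&1\end{smallmatrix}\bigr)$, which requires each $|c_j|$ to be small; but $|c_j|$ is governed by $|q|^{-C_3}|\lambda_s/\lambda_{s+1}|$, and the individual ratios $\lambda_s/\lambda_{s+1}$ (coming from the sub-leading structure of the $f_s$) are uncontrolled --- only their product over a full period equals $1$. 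So the perturbative argument closes only for $|q|$ large, not for all $|q|>1$ as the theorem asserts. The identical difficulty reappears in your ``hands-on'' alternative as the unproved lower bound $|B_{2n}|\gg\prod_{i\le n}|a_{2i}|$. Two smaller points: your $\lambda_s$ is $c_{s,e_s}(q)/q^{\deg c_{s,e_s}}$ for a possibly non-constant polynomial $c_{s,e_s}$, so it can vanish at particular $q$ with $|q|>1$ even though its leading coefficient is the common one --- the parenthetical ``nonzero for the $q$ at hand'' needs an argument or an explicit exclusion; and the limit-periodic theorems also require a statement in the parabolic boundary case, which you only rule out for $k=1$.
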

It is now an easy matter to apply our Theorem \ref{P:pr2}
to the continued fractions of Theorem \ref{T4}
to conclude that for each $q$ outside the unit circle, either the continued fraction
converges or does not converge generally.
As an illustration we have the following example.
\begin{example}\label{exa}
Let
\begin{multline*}
G_{1}(q)= 1 + \frac{6q}{1} \+
 \frac{3q^{2}+7q}{1}
\+
 \frac{3q^{3}+5q^{2}}{1}
\+
 \frac{q^{4}+7q^{3}+3 q+2}{1}
 \+\\
\frac{q^{5}+3 q^{4}+2q^{3}}{1} \+
 \frac{q^{6}+2q^{5}+7 q^{3}}{1}
\+
 \frac{q^{7}+7q^{5}}{1}
\+
 \frac{q^{8}+7q^{6}+3q^{3}+2q  }{1}
\+\,\cds \\
\cds \+ \frac{q^{4n+1}+3 q^{3n+1}+2q^{2n+1}}{1} \+
 \frac{q^{4n+2}+2q^{3n+2}+7 q^{2n+1}}{1} \\
\+
 \frac{q^{4n+3}+5q^{3n+2}+2q^{2n+3}}{1}
\+
 \frac{q^{4n+4}+7q^{3n+3}+3q^{2n+1}+2q^{n}  }{1}
\+\,\cds.
\end{multline*}
 If $|q|>1$, then the odd and even parts of $G_{1}(q)$
converge. If the odd and even parts are not equal, then $G_{1}(q)$ does not converge generally.
\end{example}

In \cite{BML03} we also studied continued fractions of the form
\begin{align*}
G(q):=b_{0}(q) +&
K_{n=1}^{\infty}\frac{a_{n}(q)}{b_{n}(q)}\\
:=g_{0}(q^{0})+
&\frac{f_{1}(q^{0})}{g_{1}(q^{0})}
 \+\,\cds \+
\frac{f_{k-1}(q^{0})}{g_{k-1}(q^{0})}
\+
\frac{f_{k}(q^{0})}{g_{0}(q^{1})} \notag \\
\+
&\frac{f_{1}(q^{1})}{g_{1}(q^{1})}
 \+\,\cds \+
\frac{f_{k-1}(q^{1})}{g_{k-1}(q^{1})}
\+
\frac{f_{k}(q^{1})}{g_{0}(q^{2})} \+\notag \\
  \cds \+
\frac{f_{k}(q^{n-1})}{g_{0}(q^{n})}
\+
&\frac{f_{1}(q^{n})}{g_{1}(q^{n})}
 \+\,\cds \+
\frac{f_{k-1}(q^{n})}{g_{k-1}(q^{n})} \+
\frac{f_{k}(q^{n})}{g_{0}(q^{n+1})} \+\cds \notag
\end{align*}
where $f_{s}(x), g_{s-1}(x) \in \mathbb{Z}[q][x]$, for $1 \leq s
\leq k$. Thus, for $n\geq 0$ and $1 \leq s \leq k$,
\begin{align}\label{con4ab}
&a_{nk+s}(q)=f_{s}(q^{n}),& &b_{nk+s-1}(q)=g_{s-1}(q^{n}).&
\end{align}
An example of a continued fraction of this type is the G\"{o}llnitz-Gordon continued
fraction ($k=1$),
\begin{align*}
GG(q):=
1+q +
 \frac{q^{2}}{1+q^{3}}
\+
 \frac{q^{4}}{1+q^{5}}
\+
 \frac{q^{6}}{1+q^{7}}
\+\,\cds.
\end{align*}
We restricted the type of continued fraction examined as follows.
We supposed that degree $(a_{1}(q))=r_{1}$,
degree $(b_{0}(q))= r_{2}$, and that,
 for $i \geq 1$,
\begin{align}\label{con5}
 \text{degree}(a_{i+1}(q))&= \text{degree}(a_{i}(q))+ a, \\
\text{degree}(b_{i}(q))&= \text{degree}(b_{i-1}(q))+ b, \notag
\end{align}
where $a$ and $b$ are fixed positive integers and $r_{1}$ and
$r_{2}$ are non-negative integers. Condition \ref{con5} means
that, for $n \geq 1$,
 $a_{n}(q)$ has degree equal to $(n-1)a + r_{1}$ and that
$b_{n}(q)$ has degree equal to $n\,b + r_{2}$. We also supposed that each $a_{n}(q)$
has the same leading coefficient $L_{a}$ and that each $b_{n}(q)$
has the same leading coefficient $L_{b}$.

For such continued fractions we had the following theorem
\cite{BML03}:
\begin{theorem}\label{T:p2}\cite{BML03}
Suppose $G(q) = b_{o} + K_{n=1}^{\infty}a_{n}(q)/b_{n}(q)$
is such that the $a_{n}:= a_{n}(q)$ and the
 $b_{n}:= b_{n}(q)$ satisfy \eqref{con4ab} and \eqref{con5}.
 Suppose further that
 each $a_{n}(q)$ has the same leading
coefficient $L_{a}$ and that each $b_{n}(q)$ has the same leading
coefficient  $L_{b}$. If $2b>a$ then $G(q)$ converges everywhere
outside the unit circle. If $2b =a$,
 then  $G(q)$  converges outside the unit circle
 to values in $\hat{\mathbb{C}}$,
except possibly at points $q$ satisfying
$q^{b-r_{1}+2r_{2}} \in
\left[-4\,L_{a}/L_{b}^2, 0\right) $ or
$\left(0,-4\,L_{a}/L_{b}^2\right]$, depending on the sign of $L_{a}$.
If $2b<a$, then the odd and even parts of
$G(q)$ converge everywhere outside the unit circle.
\end{theorem}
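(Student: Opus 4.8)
The plan is to remove the partial denominators by an equivalence transformation and to read the entire statement off the limiting behaviour of the resulting partial numerators. Writing $G(q) = b_0 + K_{n=1}^{\infty} a_n/b_n$, the equivalence transformation with $r_n = 1/b_n$ produces an equivalent continued fraction $b_0 + K_{n=1}^{\infty} c_n/1$ having the \emph{same} convergents $A_n/B_n$, where $c_n = a_n/(b_{n-1}b_n)$ for $n\geq 2$. Using \eqref{con4ab}, \eqref{con5} and the hypothesis that all $a_n(q)$ share the leading coefficient $L_a$ and all $b_n(q)$ share $L_b$, I would compute, for $|q|>1$ and $n\to\infty$,
\[
c_n = \frac{a_n}{b_{n-1}b_n} \sim \frac{L_a}{L_b^2}\, q^{\,n(a-2b)+(b-a+r_1-2r_2)},
\]
the subleading terms of each polynomial being negligible because $|q|>1$ makes the top power of $q$ dominate. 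Hence the exponent of $q$ grows linearly in $n$ with slope $a-2b$, and the three hypotheses $2b>a$, $2b=a$, $2b<a$ correspond exactly to $c_n\to 0$, $c_n\to c$ (a finite nonzero limit), and $c_n\to\infty$. Since equivalent continued fractions have identical convergents, it suffices to decide ordinary convergence, limit-periodic convergence, and odd/even convergence for $b_0 + K c_n/1$ in these three regimes.

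When $2b>a$ we have $c_n\to 0$, so $|c_n|\leq 1/4$ for all $n$ beyond some $N$; Worpitzky's theorem applies to the tail $K_{n=N}^{\infty} c_n/1$, and since a continued fraction converges precisely when one of its tails does, $G(q)$ converges. When $2b=a$ the exponent is the constant $-(b-r_1+2r_2)$, so $c_n\to c = L_a/(L_b^2 w)$ with $w := q^{\,b-r_1+2r_2}$, a limit-periodic situation. The standard convergence theorem for limit-periodic continued fractions $K c_n/1$ with $c_n\to c$ guarantees convergence in $\hat{\mathbb C}$ provided $c\notin(-\infty,-1/4]$, the exceptional ray being where the two fixed points of $w\mapsto c/(1+w)$ have equal modulus. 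Translating $c = L_a/(L_b^2 w)\in(-\infty,-1/4]$ by taking reciprocals gives $w\in \frac{L_a}{L_b^2}\cdot[-4,0)$, which is $[-4L_a/L_b^2,0)$ when $L_a>0$ and $(0,-4L_a/L_b^2]$ when $L_a<0$; this is precisely the stated exceptional set.

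When $2b<a$ we have $c_n\to\infty$, and here I would establish odd/even convergence by a \emph{second} equivalence transformation, bringing $K c_n/1$ into the form $K 1/d_n$ so that the Stern--Stolz Theorem (Theorem~\ref{STT}) applies. Choosing $d_n = r_n$ with $r_0=1$ and $r_n r_{n-1} c_n = 1$, one finds $|r_{2k}| \sim |q|^{-k(a-2b)}$ and $|r_{2k+1}| \sim |L_b^2/L_a|\,|q|^{-(k+1)(a-2b)}$, both decaying geometrically since $a-2b>0$; hence $\sum|d_n|<\infty$. By Theorem~\ref{STT} the even and odd numerator and denominator convergents of $K 1/d_n$ converge to finite limits, and since this continued fraction shares its convergents $A_n/B_n$ with $G(q)$, the odd and even parts of $G(q)$ converge (while $G(q)$ itself diverges generally).

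The main obstacle is the limit-periodic case $2b=a$: one must invoke the correct convergence theorem for $K c_n/1$ with $c_n\to c$ and verify that $c_n$ has a genuine limit, not merely that $|c_n|$ is bounded. This is exactly where the hypothesis that all $a_n$, respectively all $b_n$, share a single leading coefficient is essential, since without it $c_n$ need not converge. A secondary technical point, common to all three cases, is justifying the asymptotic for $c_n$: because $a_{nk+s}(q)=f_s(q^n)$, the subleading terms of $a_{nk+s}$ fall below the leading term by a factor $O(q^{-n})$, so the $(1+o(1))$ error is genuine and, in the case $2b<a$, does not disturb the geometric decay of the $|r_n|$ that yields $\sum|d_n|<\infty$.
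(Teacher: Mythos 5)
The paper never proves Theorem~\ref{T:p2}: it is imported verbatim from \cite{BML03}, and the only methodological clue given here is the remark that it ``was derived from theorems on limit--periodic continued fractions.'' Your architecture --- the equivalence transformation to $b_{0}+K\,c_{n}/1$ with $c_{n}=a_{n}/(b_{n-1}b_{n})$, followed by the trichotomy $c_{n}\to 0$, $c_{n}\to c$ (limit-periodic theorem with exceptional ray $(-\infty,-1/4]$), and $|c_{n}|\to\infty$ (renormalize to $K\,1/d_{n}$ and apply Theorem~\ref{STT}) --- is precisely that strategy, and your translation of $c\in(-\infty,-1/4]$ into the stated exceptional set for $q^{b-r_{1}+2r_{2}}$ is algebraically correct.

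There is, however, a genuine gap at the step you flag as ``a secondary technical point'': the asymptotic $c_{n}\sim \frac{L_{a}}{L_{b}^{2}}q^{\,n(a-2b)+(b-a+r_{1}-2r_{2})}$ for \emph{fixed} $|q|>1$ does not follow from \eqref{con4ab}, \eqref{con5} and the leading-coefficient hypotheses. Writing $f_{s}(x)=\sum_{j}g_{s,j}(q)x^{j}$, one has $a_{nk+s}(q)=g_{s,ka}(q)\,q^{nka}\,(1+o(1))$ as $n\to\infty$; the hypotheses force $g_{s,ka}(q)$ to have degree $(s-1)a+r_{1}$ and leading coefficient $L_{a}$, but not to be the monomial $L_{a}q^{(s-1)a+r_{1}}$. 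For instance $f_{1}(x)=(q+1)x^{2}$ satisfies every stated hypothesis with $L_{a}=1$, yet $a_{m}(q)=(q+1)q^{2m-2}$ is not asymptotic to $q^{2m-1}$: the relative error $1/q$ is fixed and does not decay in $m$, contradicting your claim that the subleading terms are smaller by $O(q^{-n})$. In the cases $2b\neq a$ this is harmless, since only the geometric growth or decay rate of $|c_{n}|$ enters (Worpitzky on a tail, and the summability of the $|d_{n}|$ in the Stern--Stolz argument, survive a bounded multiplicative distortion). But in the critical case $2b=a$ the actual limit is $\lim_{n}c_{nk+s}=g_{s,ka}(q)/\bigl(h_{s-1,kb}(q)h_{s,kb}(q)\bigr)$, which involves all coefficients of these leading $x$-coefficients and in general depends on $s$, so the continued fraction is limit $k$-periodic rather than limit $1$-periodic and the exceptional set you derive is not justified. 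To close the gap you must either add the hypothesis that the leading $x$-coefficients of the $f_{s}$ and $g_{s}$ are monomials in $q$ (as they are in all of the paper's examples, and as is presumably implicit in \cite{BML03}), or else invoke the limit $k$-periodic convergence theory and recompute the exceptional set from the composed linear fractional transformation.
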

Remark: Both our Theorem \ref{T4} and \ref{T:p2}
were  derived from theorems  on limit--periodic continued fractions and
give stronger results than can be derived from applying simple convergence criteria such
as  Worpitzky's Theorem.

Once again it is  easy to apply our Theorem \ref{P:pr2}
to the continued fractions of Theorem \ref{T:p2}
to conclude, in the case $2b<a$, that for each $q$ outside the unit circle, either the continued fraction
converges or does not converge generally. As an illustration we have the following example.
\begin{example}\label{exab}
Let
 {\allowdisplaybreaks
\begin{multline*}
G_{2}(q):= q+2 \,+\\
\frac{q^{3}+5q^{2}}{q^{2}+2}
\+
 \frac{q^{6}+2q^{4}+7q^{2}}{q^{3}+2}
\+
 \frac{q^{9}+2q^{6}+5q^{4}}{q^{4}+2}
\+
 \frac{q^{12}+7q^{6}+3 q^{2}+2}{q^{5}+q+1}
 \+\\
\frac{q^{15}+3 q^{8}+2q^{6}}{q^{6}+q^{2}+1}
 \+
 \frac{q^{18}+2q^{10}+7 q^{6}}{q^{7}+q^{2}+1}
\+
 \frac{q^{21}+7q^{10}}{q^{8}+q^{3}}
\+
 \frac{q^{24}+7q^{12}+3q^{6}+2q^{2}  }{q^{9}+q^{2}+1}\\
\+ \cds \+
 \frac{q^{12n+3}+3 q^{6n+2}+2q^{4n+2}}{q^{4n+2}+q^{2n}+1}
\+
 \frac{q^{12n+6}+2q^{6n+4}+7 q^{4n+2}}{q^{4n+3}+q^{2n}+1}\\
\+
 \frac{q^{12n+9}+5q^{6n+4}+2q^{4n+6}}{q^{4n+4}+q^{3n}+1}
\+
 \frac{q^{12n+12}+7q^{6n+6}+3q^{4n+2}+2q^{2n}  }{q^{4(n+1)+1}+q^{n+1}+1}
\+\,\cds .
\end{multline*}
}  If $|q|>1$, then the odd and even parts of $G_{2}(q)$ converge.
If the odd and even parts are not equal, then $G_{2}(q)$ does not converge generally.
\end{example}

\section{Continued fractions whose odd and even parts tend to different limits}
Since our Theorem \ref{P:pr2} deals with continued fractions whose odd and even
parts converge to different values, it is desirable to know something about the
form of such continued fractions. We have the following theorem.
\begin{theorem}\label{todev}
Suppose the odd and even parts of the continued fraction $K_{n=1}^{\infty}a_{n}/1$
converge to different values. Then $\lim_{n \to \infty}|a_{n}| = \infty$.
\end{theorem}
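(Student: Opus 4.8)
The plan is to avoid referring to the limits $f_1,f_2$ through ordinary differences and instead to track the homogeneous ``size'' $\rho_n := \sqrt{|A_n|^2+|B_n|^2}$ of the convergent vector $(A_n,B_n)$ of $K_{n=1}^{\infty}a_n/1$, where $A_n/B_n =: x_n$. The advantage of $\rho_n$ over the approximants $x_n$ is that it treats a finite limit and the value $\infty$ on the same footing. I expect the main obstacle to be exactly the possibility that one of the two limits is $\infty$: then the even (or odd) approximants blow up and any naive estimate of $x_n-x_{n-1}$ becomes indeterminate, forcing an unpleasant case split. Writing the chordal metric in homogeneous form sidesteps this completely, so I would set the whole argument up that way.

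First I would record two elementary identities coming from the recurrences $A_n=A_{n-1}+a_nA_{n-2}$ and $B_n=B_{n-1}+a_nB_{n-2}$: the determinant formula $A_nB_{n-1}-A_{n-1}B_n=(-1)^{n-1}\prod_{k=1}^{n}a_k$, and, by substituting the recurrence once more, $A_{n+1}B_{n-1}-A_{n-1}B_{n+1}=A_nB_{n-1}-A_{n-1}B_n$. With $P_n:=\prod_{k=1}^{n}|a_k|$, both cross-determinants then have absolute value $P_n$. I would also note at the outset that the hypothesis forces $a_n\neq0$ for every $n$: if $a_N=0$ then $x_N=x_{N-1}$, and by the determinant identity $x_m=x_{m-1}$ for all $m\geq N$, so the odd and even parts would converge to a common value, contrary to assumption. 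In particular $P_n\neq0$, and since a nonzero cross-determinant means neither convergent vector is zero, $\rho_n>0$ for all $n$.

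Next I would use the homogeneous form of the chordal metric, $d(A_n/B_n,\,A_m/B_m)=|A_nB_m-A_mB_n|/(\rho_n\rho_m)$, which stays valid when a denominator vanishes. Combining this with the two determinant identities, the common factors $P_n$ and $\rho_{n-1}$ cancel in the quotient and leave the exact identity $d(x_{n+1},x_{n-1})/d(x_n,x_{n-1})=\rho_n/\rho_{n+1}$. Now $x_{n+1}$ and $x_{n-1}$ belong to the same part (both odd or both even), so they converge to a common value and $d(x_{n+1},x_{n-1})\to0$ by continuity of $d$, whereas $x_n$ and $x_{n-1}$ belong to different parts, so $d(x_n,x_{n-1})\to d(f_1,f_2)>0$. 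Hence the quotient tends to $0$, and therefore $\rho_{n+1}/\rho_n\to\infty$.

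Finally I would convert this growth into the desired lower bound. Viewing the recurrence vectorially, $(A_n,B_n)-(A_{n-1},B_{n-1})=a_n(A_{n-2},B_{n-2})$, so $|a_n|\rho_{n-2}=\|(A_n,B_n)-(A_{n-1},B_{n-1})\|\geq\rho_n-\rho_{n-1}$ by the reverse triangle inequality (with $\rho_n>\rho_{n-1}$ for large $n$). Dividing gives $|a_n|\geq(\rho_{n-1}/\rho_{n-2})(\rho_n/\rho_{n-1}-1)$, and since both $\rho_{n-1}/\rho_{n-2}\to\infty$ and $\rho_n/\rho_{n-1}-1\to\infty$, I conclude $|a_n|\to\infty$. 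As noted, the only genuine subtlety is the possible infinite limit, and it is absorbed cleanly by the homogeneous bookkeeping; the remaining manipulations are routine.
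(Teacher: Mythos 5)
Your proof is correct, but it takes a genuinely different route from the paper's. The paper invokes Bernoulli's 1775 formula (its Proposition \ref{pber}) together with the uniqueness statement of Lemma \ref{lem1} to write $a_{2n}$ explicitly as
\[
a_{2n}=\frac{(K_{2n-2}-K_{2n-3})(K_{2n-1}-K_{2n})}{(K_{2n-1}-K_{2n-3})(K_{2n}-K_{2n-2})},
\]
whose numerator tends to $-(\alpha-\beta)^2\neq 0$ while the denominator is a product of differences of terms of null sequences, so the conclusion is immediate and one even reads off the exact rate at which $|a_{2n}|$ blows up. You instead work with the convergent vectors in homogeneous coordinates: the two determinant identities combined with the homogeneous form of the chordal metric give the identity $d(x_{n+1},x_{n-1})/d(x_n,x_{n-1})=\rho_n/\rho_{n+1}$, hence $\rho_{n+1}/\rho_n\to\infty$, and the recurrence plus the reverse triangle inequality convert this into $|a_n|\ge(\rho_{n-1}/\rho_{n-2})(\rho_n/\rho_{n-1}-1)\to\infty$. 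Your version is self-contained (no appeal to Bernoulli's formula or to the uniqueness lemma) and, as you anticipated, it covers without any case split the situation in which one of the two limits is $\infty$; the paper's proof as written takes $\alpha\neq\beta\in\mathbb{C}$ and manipulates ordinary differences of approximants, so it does not literally cover that case. The trade-off is that the paper's argument is shorter once Bernoulli's formula is quoted and yields an asymptotic formula for $a_n$ rather than just the divergence of $|a_n|$.
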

We need two preliminary results.
\begin{lemma}\label{lem1}
Suppose $\{K_{n}\}_{n=1}^{\infty}$ is the sequence of  classical
approximants of the continued fraction $K_{n=1}^{\infty}a_{n}/1$, where
$a_{n} \not = 0$, for $n \geq 1$.
If the continued fraction $K_{n=1}^{\infty}c_{n}/1$  also has
$\{K_{n}\}_{n=1}^{\infty}$  as it  sequence of  classical
approximants and $c_{n} \not = 0$, for $n \geq 1$, then $a_{n}=c_{n}$ for $n\geq 1$.
\end{lemma}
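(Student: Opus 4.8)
The plan is to show by induction on $n$ that the values $K_1,\dots,K_n$ determine $a_1,\dots,a_n$ uniquely; applying this to both $K_{n=1}^{\infty}a_n/1$ and $K_{n=1}^{\infty}c_n/1$, which by hypothesis share the same approximant sequence, then forces $a_n=c_n$ for every $n$. The natural tool is the family of linear fractional maps $S_n(w)=(A_n+wA_{n-1})/(B_n+wB_{n-1})$ attached to the first continued fraction, where $A_n,B_n$ are its numerator and denominator convergents; note that $S_n(0)=A_n/B_n=K_n$.

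The first step is the key algebraic identity. Since $b_n=1$, the convergent recurrences read $A_n=A_{n-1}+a_nA_{n-2}$ and $B_n=B_{n-1}+a_nB_{n-2}$, so
\[
K_n=\frac{A_n}{B_n}=\frac{A_{n-1}+a_nA_{n-2}}{B_{n-1}+a_nB_{n-2}}=S_{n-1}(a_n).
\]
Thus $a_n$ is a root of $S_{n-1}(w)=K_n$. I would then argue that this root is unique because $S_{n-1}$ is an invertible M\"obius transformation of $\hat{\mathbb{C}}$: its determinant is $A_{n-1}B_{n-2}-A_{n-2}B_{n-1}$, which the standard determinant formula for continued fractions evaluates to $(-1)^{n}a_1\cdots a_{n-1}$, and this is nonzero precisely because every $a_k\neq 0$. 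This is exactly where the hypothesis $a_k\neq 0$ is essential, and I expect this nondegeneracy to be the only real obstacle; everything else is bookkeeping. Working throughout in $\hat{\mathbb{C}}$ with genuine M\"obius maps also sidesteps any difficulty caused by approximants equal to $\infty$, that is, by vanishing denominators $B_n$.

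With this in hand the induction is routine. For $n=1$, $S_0$ is the identity, so $a_1=K_1$, and the identical computation for the $c$-continued fraction gives $c_1=K_1$, whence $a_1=c_1$. Assuming $a_j=c_j$ for $1\leq j\leq n-1$, the two continued fractions have identical convergents through index $n-1$, so the maps $S_{n-1}$ built from each one coincide; since both continued fractions yield the same approximant $K_n$, the identity above gives $S_{n-1}(a_n)=K_n=S_{n-1}(c_n)$, and injectivity of $S_{n-1}$ forces $a_n=c_n$. This closes the induction and proves the lemma.
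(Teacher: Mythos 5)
Your proof is correct. The paper dismisses this lemma with the single word ``Elementary,'' so there is no argument to compare against; your induction via $K_n=S_{n-1}(a_n)$, with injectivity of $S_{n-1}$ guaranteed by the determinant formula $A_{n-1}B_{n-2}-A_{n-2}B_{n-1}=(-1)^{n}a_1\cdots a_{n-1}\neq 0$, is a clean and complete way to supply the missing details, and it correctly identifies the nonvanishing of the $a_k$ as the one essential hypothesis.
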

\begin{proof}
Elementary.
\end{proof}
We also use the following result,
proved by Daniel Bernoulli in 1775 \cite{B75} (see, for example, \cite{K63}, pp. 11--12).
{\allowdisplaybreaks
\begin{proposition}\label{pber}
Let $\{K_{0},K_{1}, K_{2},\ldots\}$ be a sequence of complex numbers such that
$K_{i}\not = K_{i-1}$,
for $i=1,2,\ldots$.
Then  $\{K_{0},K_{1}, K_{2},\ldots\}$  is the sequence of approximants of the
continued fraction
\begin{align*}
K_{0}+\frac{K_{1}-K_{0}}{1}
\+
\frac{K_{1}-K_{2}}{K_{2}-K_{0}}
\+
\frac{(K_{1}-K_{0})(K_{2}-K_{3})}
            {K_{3}-K_{1}}
\+\\
\cds
\+
\frac{(K_{n-2}-K_{n-3})(K_{n-1}-K_{n})}
            {K_{n}-K_{n-2}}
\+
\cds \notag \\
\thicksim
K_{0}+\frac{K_{1}-K_{0}}{1}
\+
\frac{\frac{K_{1}-K_{2}}{K_{2}-K_{0}}}{1}
\+
\frac{\frac{(K_{1}-K_{0})(K_{2}-K_{3})}
            {(K_{2}-K_{0})(K_{3}-K_{1})}}{1}
\+  \notag \\
\cds
\+
\frac{\frac{(K_{n-2}-K_{n-3})(K_{n-1}-K_{n})}
            {(K_{n-1}-K_{n-3})(K_{n}-K_{n-2})}}{1}
\+
\cds .  \notag
\end{align*}
\end{proposition}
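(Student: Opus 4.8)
The plan is to prove the sharper, fully explicit statement that the canonical numerators $A_{n}$ and denominators $B_{n}$ of the first continued fraction displayed in Proposition \ref{pber} are given in closed form by
\[
B_{n}=\prod_{j=1}^{n-1}(K_{j}-K_{j-1}),\qquad A_{n}=K_{n}B_{n},
\]
with the usual initial data $A_{-1}=1$, $A_{0}=K_{0}$, $B_{-1}=0$, $B_{0}=1$ and the empty product read as $1$. Since the hypothesis $K_{j}\neq K_{j-1}$ forces $B_{n}\neq 0$ for every $n$, the $n$-th approximant $A_{n}/B_{n}$ is well defined and equals $K_{n}$, which is exactly what is claimed.

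First I would record the three-term recurrences $A_{n}=b_{n}A_{n-1}+a_{n}A_{n-2}$ and $B_{n}=b_{n}B_{n-1}+a_{n}B_{n-2}$ together with the partial quotients read off from the displayed fraction: $b_{1}=1$, $a_{1}=K_{1}-K_{0}$; $b_{2}=K_{2}-K_{0}$, $a_{2}=K_{1}-K_{2}$; and, for $n\geq 3$, $b_{n}=K_{n}-K_{n-2}$, $a_{n}=(K_{n-2}-K_{n-3})(K_{n-1}-K_{n})$. I would then confirm the proposed closed forms directly for $n=0,1,2$, a short computation that already exhibits the telescoping visible in the cancellation $1+(K_{1}-K_{2})/(K_{2}-K_{0})=(K_{1}-K_{0})/(K_{2}-K_{0})$.

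The heart of the argument is the inductive step for $n\geq 3$. The key observation is that the closed form obeys the one-term relation $B_{n-1}=(K_{n-2}-K_{n-3})B_{n-2}$, whence $a_{n}B_{n-2}=(K_{n-1}-K_{n})B_{n-1}$; substituting this into $b_{n}B_{n-1}+a_{n}B_{n-2}$ collapses the sum to $(K_{n-1}-K_{n-2})B_{n-1}=B_{n}$, and the analogous substitution into $b_{n}A_{n-1}+a_{n}A_{n-2}$ collapses to $K_{n}(K_{n-1}-K_{n-2})B_{n-1}=K_{n}B_{n}$. Thus both recurrences are satisfied and the induction closes. I would emphasize that this verification never divides by $K_{n}-K_{n-2}$, so it remains valid even when some $K_{n}=K_{n-2}$: in that case the partial denominator $b_{n}$ vanishes, yet the approximant is still well defined because $B_{n}\neq 0$.

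Finally, the equivalence asserted by the second displayed form is the standard equivalence transformation with scaling factors $r_{0}=1$ and $r_{n}=1/b_{n}$ for $n\geq 1$ (legitimate precisely when $K_{n}\neq K_{n-2}$), under which $a_{n}\mapsto a_{n}/(b_{n}b_{n-1})$ and $b_{n}\mapsto 1$; reading off $a_{n}/(b_{n}b_{n-1})$ reproduces the partial numerators listed in the second fraction. The main obstacle here is not conceptual but the index bookkeeping: the terms $a_{1},b_{1},a_{2},b_{2}$ do not obey the generic $n\geq 3$ formula (formally because it would invoke an undefined $K_{-1}$), so the base cases must be dispatched separately before the uniform inductive step takes over.
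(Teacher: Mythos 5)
Your proof is correct. Note, however, that the paper contains no proof of Proposition \ref{pber} to compare against: the result is attributed to Daniel Bernoulli \cite{B75}, with Khovanskii \cite{K63} cited for a proof. Your argument --- positing the closed forms $B_{n}=\prod_{j=1}^{n-1}(K_{j}-K_{j-1})$ and $A_{n}=K_{n}B_{n}$ for the canonical denominators and numerators, checking $n=0,1,2$ by hand, and closing the induction for $n\geq 3$ via the telescoping identity $a_{n}B_{n-2}=(K_{n-1}-K_{n})B_{n-1}$ --- is essentially the classical verification (Khovanskii derives the partial quotients by solving the three-term recurrences subject to $A_{n}=K_{n}B_{n}$ rather than verifying a guessed solution, but the algebra is the same), so your route is a legitimate self-contained replacement for the citation. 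Your write-up also adds two precisions that the bare statement glosses over and that are worth keeping: first, the hypothesis $K_{i}\neq K_{i-1}$ alone guarantees $B_{n}\neq 0$, so the approximants of the \emph{first} displayed fraction equal $K_{n}$ even if some partial denominator $b_{n}=K_{n}-K_{n-2}$ vanishes; second, the equivalence transformation producing the \emph{second} displayed fraction additionally requires $K_{n}\neq K_{n-2}$, a condition the proposition tacitly assumes (in the paper's application, in the proof of Theorem \ref{todev}, it does hold, since for $K_{n=1}^{\infty}a_{n}/1$ with all $a_{n}\neq 0$ the determinant formula forces consecutive even, respectively odd, approximants to be distinct). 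One minor notational slip: with $r_{0}=1$ and $r_{n}=1/b_{n}$, the transformed first numerator is $a_{1}r_{1}r_{0}=a_{1}/b_{1}$, not $a_{1}/(b_{1}b_{0})$; since $b_{1}=1$ this is harmless, and your separate handling of the initial terms already covers it.
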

} \emph{Proof of Theorem \ref{todev}.} Let
$\{K_{n}\}_{n=1}^{\infty}$ denote the sequence of classical
approximants of the continued fraction $K_{n=1}^{\infty}a_{n}/1$.
By assumption there exist $\alpha \not = \beta \in \mathbb{C}$
such that
\begin{align*}
&\lim_{n \to \infty} K_{2n} = \alpha ,& &\lim_{n \to \infty} K_{2n+1} = \beta .&
\end{align*}
Hence there exist two null sequences $\{\alpha_{n}\}_{n=0}^{\infty}$ and
$\{\beta_{n}\}_{n=0}^{\infty}$ such that
\begin{align}\label{keqs}
& K_{2n} = \alpha + \alpha_{n},& &K_{2n+1} = \beta + \beta_{n}.&
\end{align}
By Lemma \ref{lem1}, Proposition \ref{pber} and \eqref{keqs}, it follows that
\begin{align*}
a_{2n}&=\frac{(K_{2n-2}-K_{2n-3})(K_{2n-1}-K_{2n})}
            {(K_{2n-1}-K_{2n-3})(K_{2n}-K_{2n-2})}\\
&=\frac{(\alpha + \alpha_{n-1}-\beta - \beta_{n-2})(\beta + \beta_{n-1}-\alpha - \alpha_{n})}
{(\beta_{n-1}-\beta_{n-2})(\alpha_{n}-\alpha_{n-1})}.
\end{align*}
Since $\alpha \not = \beta$ and $\{\alpha_{n}\}_{n=0}^{\infty}$ and
$\{\beta_{n}\}_{n=0}^{\infty}$ are null sequences, it follows that
$\lim_{n \to \infty}|a_{2n}| = \infty$. That $\lim_{n \to \infty}|a_{2n-1}| = \infty$
follows similarly.
\begin{flushright}
$\Box$
\end{flushright}

\section{Concluding Remarks}

Let $m \geq 2$ be a positive integer.
A  continued fraction for which the odd- and even parts tend to different
limits may be regarded as a special case  ($m=2$) of continued fractions
for which the sequence of approximants in each arithmetic progression
modulo $m$ tends to a different limit. We will investigate such continued fractions in a later paper
and  also look at the question of whether or not they
converge in the general sense.

We close with a question. Does there exist a continued fraction
$K_{n=1}^{\infty}a_{n}/1$ whose odd and even parts converge to different values,
 for which the sequence $\{a_{2n+1}/a_{2n}\}$ is bounded and
whose Stern--Stolz series diverges?
This would mean that our Theorem  \ref{P:pr2} could show divergence in the general
sense for a continued fraction that the Stern-Stolz Theorem could not be applied to.

On the other hand, it may be that if $\{a_{n}\}$ is any sequence of non-zero
complex numbers  such that
 the sequence $\{a_{2n+1}/a_{2n}\}$ is bounded
and the continued fraction
$K_{n=1}^{\infty}a_{n}/1$ is such that its odd and even parts converge to different values,
then the Stern--Stolz series for   $K_{n=1}^{\infty}a_{n}/1$ converges.
A proof of this would be interesting. In this latter  situation
our Theorem \ref{P:pr2} does not give anything new and may just be easier to apply
to certain types of continued fraction.

{\allowdisplaybreaks

}

\end{document}